\renewcommand{\Pr}{\mathbb P}
\newcommand{\OO}{\mathcal{O}}
\newcommand{\myimg}[2]{\IfFileExists{#1}{\includegraphics[scale=#2]{#1}   }{  Image not found  \errmessage{image #1 is missing}}}
\renewcommand{\epsilon}{\varepsilon}
\newtheorem{theorem}{Theorem}[section]
\newtheorem{lemma}[theorem]{Lemma}
\newtheorem{proposition}[theorem]{Proposition}
\newtheorem{definition}[theorem]{Definition}
\newtheorem{observation}[theorem]{Observation}
\newcommand\spmm{\operatorname{spmm}}
\newcommand\bmm{\operatorname{bmm}}
\newcommand\bwmm{\operatorname{bwmm}}
\title{Mastermind with a Linear Number of Queries}
\newcommand*\samethanks[1][\value{footnote}]{\footnotemark[#1]}
\author{ 
Anders Martinsson%
\thanks{Department of Computer Science, ETH Zurich, Switzerland.
        Email: \{anders.martinsson$\vert$sup\}@inf.ethz.ch} 
\and
Pascal Su%
\samethanks[1]%
\ \thanks{author was supported by grant no. 200021 169242 of the Swiss National Science Foundation}
}
\date{\today}
\begin{document}
  \maketitle

\begin{abstract}
Since the 1960s Mastermind has been studied for the combinatorial and information-theoretical interest the game has to offer. Many results have been discovered starting with Erd\H{o}s and R\'enyi determining the optimal number of queries needed for two colors. For $k$ colors and $n$ positions, Chv\'atal found asymptotically optimal bounds when $k \le n^{1-\epsilon}$. Following a sequence of gradual improvements for $k\geq n$ colors, the central open question is to resolve the gap between $\Omega(n)$ and $\mathcal{O}(n\log \log n)$ for $k=n$.

In this paper, we resolve this gap by presenting the first algorithm for solving $k=n$ Mastermind with a linear number of queries. As a consequence, we are able to determine the query complexity of Mastermind for any parameters $k$ and $n$.

\end{abstract}


\section{Introduction}

Mastermind is a famous code-breaking board game for two players. One player, codemaker, creates a hidden codeword consisting of a sequence of four colors. The goal of the second player, the codebreaker, is to determine this codeword in as few guesses as possible. After each guess, codemaker provides a certain number of black and white pegs indicating how close the guess is to the real codeword. The game is over when codebreaker has made a guess identical to the hidden string.

The board game version was first released in 1971, though the idea of the game is older, and variations of this game have been played earlier under other names, such as the pen-and-paper based games of Bulls and Cows, and Jotto. The game has been played on TV as a game show in multiple countries under the name of Lingo. Recently, a similar web-based game has gained much attention under the name of Wordle.

Guessing games such as Mastermind have gained much attention in the scientific community. This is in part due to their popularity as recreational games, but importantly also as natural problems in the intersection of information theory and algorithms. In particular, it is not too hard to see that two-color Mastermind is equivalent to coin weighing with a spring scale. This problem was first introduced in 1960 by Shapiro and Fine \cite{SFproblemE1399}. In subsequent years, a number of different approaches have been devised which solve this problem up to a constant factor of the information-theoretic lower bound.

The general $k$ color $n$ slot Mastermind first appeared in the scientific literature in 1983 in a paper by Chv\'atal \cite{chvatal1983mastermind}. By extending ideas of Erd\H{o}s and R\'enyi \cite{ERcoin} from coin-weighing he showed that the information-theoretic lower bound is sharp up to a constant factor for $k\leq n^{1-\varepsilon}$ for any fixed $\varepsilon>0$.

Surprisingly, for a larger number of colors, the number of guesses needed to reconstruct the codeword has remained unknown. In particular, for $k=n$, the best known upper bound remained for a long time $\OO(n \log n)$ as shown by Chv\'atal, with only constant factor improvements given in \cite{CCH96}, \cite{Goo09} and \cite{JP11}. Only quite recently, this bound was improved to $\OO(n \log \log n)$ in an article by Doerr, Doerr, Sp\"ohel, and Thomas \cite{DDST16} published in Journal of the ACM in 2016. At the same time, no significant improvement on the information-theoretic lower bound of $n$ queries has been obtained.

The challenge of finding short solutions to Mastermind in the setting of $k=n$ colors and positions can be thought of as a bootstrapping paradox. On the one hand, a query could in principle return anything between $0$ and $n$ black pegs, awarding codebreaker potentially with as many as $\log(n+1)$ bits of information about the codeword. As it takes $\log(n^n)=n\log n$ bits of information to encode an arbitrary codeword, it follows that any strategy needs $\Omega(n\log n/\log(n+1))=\Omega(n)$ queries. On the other hand, given no prior information about the codeword, codebreaker can only expect to get a constant number of correct guesses, meaning that the query only awards codebreaker with $\OO(1)$ bits of information\footnote{For simplicity, we here assume that codebreaker only receives the number of black pegs for each query. The situation is similar if also white pegs are provided, but it takes additional arguments to see that not too much additional information can come from the white pegs.}. Morally, one expects queries with higher information content to gradually become available as codebreaker gains information about the codeword, but making sense of this formally has turned out to be difficult problem. In particular, should one expect the entropy lower bound of $\Omega(n)$ to be the truth, or could it be that this bound is unattainable as it takes too long to reach the point where queries give the full $\Theta(\log n)$ bits of information?

In this paper, we resolve this problem after almost 40 years by showing how the $n$ color $n$ slot Mastermind can be solved with $\OO(n)$ guesses with high probability, matching the information-theoretic lower bound up to a constant factor. By combining this with a result by Doerr, Doerr, Sp\"ohel, and Thomas \cite{DDST16}, we determine asymptotically the optimal number of guesses for all $k$ and $n$.


\subsection{Game of Mastermind}

We define a game of Mastermind with $k \geq 1$ \emph{colors} and $n \geq 1$ \emph{positions} as a two-player game played as follows. One player, the codemaker, initially chooses a hidden codeword $c = (c_1, ...,c_n)$ in $[k]^n$. The other player, the codebreaker, is then tasked with determining the hidden codeword by submitting a sequence of queries of the form $q = (q_1, ...,q_n) \in [k]^n$. For each query, the codemaker must directly respond with information on how well the query matches the codeword. The codebreaker may use this information to adapt subsequent queries. The precise information given depends on which variation of Mastermind is played, as will be specified below. The game is over as soon as codebreaker makes a query such that 
$q=c$. The goal of the codebreaker is to make the game ends after as few queries as possible.

For each pair of a codeword $c$ and a query $q$, we associate two integers called the number of \emph{black pegs} and \emph{white pegs} respectively.  The number of black pegs,
\[b(q)=b_c(q) := |\{i\in [n] : q_i=c_i\}|,\]
is the number of positions in which the codeword matches the query string. The number of white pegs is often referred to as the number of correctly guessed colors that do not have the correct position. More precisely, the number of white pegs,
\[w(q)=w_c(q):= \max_{\sigma \in S_n} |\{i\in [n] : q_{\sigma(i)}=c_i\}| - b_c(q),\]
is the number of additional correct positions one can maximally obtain by permuting the entries of $q$.

In this paper, we will consider two versions of Mastermind. First, in \emph{black-peg} Mastermind the codemaker gives only the number of black pegs as an answer to every query. Second, in \emph{black-white-peg} Mastermind the codemaker gives both the number of black and the number of white pegs as answers to every query.

\subsection{Our results}
Our contribution lies in resolving the black-peg Mastermind game for $k = n$ where we have as many possible colors as positions. 

\begin{restatable}{theorem}{main}
\label{thm:main}There exists a randomized algorithm that solves black-peg Mastermind with $n$ colors and $n$ positions with high probability using $\OO(n)$ queries. Moreover, the runtime of the algorithm is polynomial in $n$.
\end{restatable}

The above result is best possible up to a constant factor. This can be seen by observing that there are at most $n+1$ possible answers to any query. Hence codebreaker gains at most $\log_2(n+1)$ bits of information from a query. As $\log_2(n^n)=n\log_2 n$ bits are required to uniquely determine the codeword, any strategy needs at least $\Omega(n)$ queries. Moreover, if we additionally assume that the codeword can only be a permutation of the colors, that is each color must appear exactly once, then we can solve it deterministically with $\OO(n)$ queries.

Combining our results with earlier results by Doerr, Doerr, Sp\"ohel, and Thomas we are able to resolve the randomized query complexity of Mastermind in the full parameter range, thus finally resolving this problem after almost 40 years.  We define the randomized query complexity as the minimum (over all strategies) maximum (over all codewords) expected number of queries needed to win the game.

\begin{restatable}{theorem}{general}
\label{thm:general}
For $k$ colors and $n$ positions, the randomized query complexity of Mastermind is 
\[\Theta(n \log k / \log n + k/n),\] 
if codebreaker receives both black-peg and white-peg information for each query, and 
\[\Theta(n \log k / \log n + k),\] 
if codebreaker only receives black-peg information.
\end{restatable}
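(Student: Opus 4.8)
The plan is to derive Theorem~\ref{thm:general} by combining Theorem~\ref{thm:main} with the reductions of Doerr, Doerr, Sp\"ohel, and Thomas~\cite{DDST16}, with Chv\'atal's bound for few colors~\cite{chvatal1983mastermind}, and with two elementary lower bounds; all the genuinely new content lives in Theorem~\ref{thm:main}, so the task here is to stitch the parameter ranges together. It helps to first record that for $k\ge n$ one has $n\log k/\log n+k/n=\Theta(n+k/n)$ and $n\log k/\log n+k=\Theta(n+k)$, whereas for $k\le n$ both expressions equal $\Theta(n\log k/\log n)$, because in those regimes $\log k/\log n$ and $k\log n/(n\log k)$ are $\OO(1)$. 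Hence it suffices to prove matching upper and lower bounds of $\Theta(n\log k/\log n)$ for $k\le n$ (in both feedback models), of $\Theta(n+k/n)$ for $k\ge n$ with black and white pegs, and of $\Theta(n+k)$ for $k\ge n$ with only black pegs.

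For the lower bounds I would use the entropy bound together with a ``coupon-collector'' argument, both via Yao's principle. There are $k^n$ codewords, and each answer ranges over at most $(n+1)^2$ values (a pair of peg counts) with black and white pegs, and over $n+1$ values with black pegs only, so the entropy bound already forces $\Omega(n\log k/\log n)$ queries in both models. For the remaining terms, consider the uniform distribution on the monochromatic codewords $(c,\dots,c)$, $c\in[k]$: a deterministic strategy follows a fixed query sequence so long as every answer is $(0,0)$, and since the colors occurring in its first $j$ queries number at most $jn$, the probability that the secret color $c$ has appeared after $j$ queries is at most $jn/k$; this forces $\Omega(k/n)$ expected queries. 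With black pegs only this is sharpened by the distribution on codewords $(c,1,1,\dots,1)$: there every answer equals the constant $|\{\,i\ge2:q_i=1\,\}|$ until the strategy plays a query whose first coordinate is $c$, and since a query then reveals only a single candidate for $c$, this forces $\Omega(k)$ expected queries. Combined with the entropy bound this gives all the claimed lower bounds.

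For the upper bounds I would split on $k$. The case $k\le n$ is classical: for $k\le n^{1-\eps}$ (any fixed $\eps>0$) Chv\'atal's random-guessing strategy~\cite{chvatal1983mastermind} uses $\OO(n\log k/\log n)$ queries --- in the black-peg-only model one instead uses a non-adaptive coin-weighing-type construction --- while for $k\ge n^{\delta}$ one treats the instance as an $n$-color instance (the surplus colors never occur) and applies Theorem~\ref{thm:main}, costing $\OO(n)=\OO(n\log k/\log n)$ since then $\log k/\log n=\Theta(1)$; choosing, say, $\delta=\eps=\tfrac12$ leaves no gap. Note it is this second step, not earlier work, that settles $n^{1-\eps}<k\le n$ in the black-peg model, where \cite{DDST16} had only reached $\OO(n\log\log n)$. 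For $k\ge n$ with black and white pegs, apply the reduction of~\cite{DDST16}, which solves the game with $\OO(k/n)$ queries plus one call to an $n$-color, $n$-position black-peg solver; feeding in Theorem~\ref{thm:main} yields $\OO(n+k/n)$. For $k\ge n$ with black pegs only, first spend $k$ queries on the monochromatic guesses $(j,\dots,j)$, $j\in[k]$, to learn the color histogram and hence the set of at most $n$ colors that actually occur; relabelling these colors as a subset of $[n]$ and invoking Theorem~\ref{thm:main} recovers the codeword in $\OO(n)$ further queries, for a total of $\OO(n+k)$. Finally, each of these algorithms meets its query bound with high probability; since the game ends only when a correct guess is played, a failure is detected by exceeding a query budget, so restarting --- each attempt succeeding with probability bounded below by a constant --- turns the high-probability guarantees into the required bounds on the \emph{expected} number of queries.

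No individual step is deep; the new ingredient is Theorem~\ref{thm:main}, and the rest is assembly. The part I expect to need the most care is the black-peg-only regime $k>n$: the best previously known deterministic bounds (for instance $n\lceil\log_2 n\rceil-n+k+1$ of~\cite{JP11}) are a $\Theta(\log n)$ factor too large when $k$ is only mildly larger than $n$ (say $k=2n$, where they give $\OO(n\log n)$ but the truth is $\OO(n)$), so one genuinely relies on the histogram reduction above. One should also double-check that the entropy-optimal few-colors bound holds in the black-peg-only feedback model for all $k\le n^{1/2}$ --- Chv\'atal's analysis uses the white pegs --- so that the ``few colors'' and ``$k$ close to $n$'' cases overlap cleanly and cover the entire parameter space.
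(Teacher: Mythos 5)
Your proposal is correct and follows essentially the same route as the paper: Chv\'atal's bound for small $k$, a direct application of Theorem~\ref{thm:main} for $\sqrt{n}\le k\le n$, the reduction of \cite{DDST16} for $k\ge n$ with white pegs, a color-histogram preprocessing step for black-peg-only $k\ge n$, and entropy plus coupon-collector-type lower bounds. The one detail you gloss over is that for $k<n$ the codebreaker may only submit queries in $[k]^n$, so ``treating the instance as an $n$-color instance'' requires replacing the colors $>k$ occurring in the algorithm's queries by the corresponding entries of a known zero-string obtained via Lemma~\ref{lem:zero}, which is exactly how the paper handles this case.
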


We believe the same result holds true for the deterministic query complexity, but we will not attempt to prove it here. 

\subsection{Related work}

The study of two-color Mastermind dates back to an American Mathematical Monthly post in 1960 by Shapiro and Fine \cite{SFproblemE1399}: ``\emph{Counterfeit coins weigh 9 grams and genuine coins all weigh 10 grams. One is given $n$ coins of unknown composition, and an accurate scale (not a balance). How many weighings are needed to isolate the counterfeit coins?}''

It can be observed, already for $n=4$, that fewer than $n$ weighings are required. The authors consequently conjecture that $o(n)$ weighings suffice for large $n$. Indeed, the entropy lower bound states that at least $n/\log_2(n+1)$ weighings are necessary. In the subsequent years, many techniques were independently discovered that attain this bound within a constant factor \cite{ERcoin, cantor1966determination, lindstrom1964combinatory, lindstrom1965combinatorial}, see also \cite{ERcoin} for further early works. Erd\H{o}s and R\'enyi \cite{ERcoin} showed that a sequence of $(2+o(1))n/\log_2 n$ random weighings would uniquely identify the counterfeit coins with high probability, and by the probabilistic method, there is a deterministic sequence of $\OO(n/\log n)$ weighings that identify any set of counterfeit coins.

Cantor and Mills \cite{cantor1966determination} proposed a recursive solution to this problem. Here it is natural to consider \emph{signed} coin weighings, where, for each coin on the scale, we may choose whether it contributes with its weight or minus its weight. We call a $\{-1, 0, 1\}$ valued matrix $A$ an \emph{identification matrix} if any binary vector $x$ of compatible length to $A$ can be uniquely determined by the values of $Ax$. So for example $\begin{pmatrix} 1 & 0\\ 0 & 1\end{pmatrix}$ is an identification matrix for binary vectors of length $2$. It is not too hard to show that if $A$ is an identification matrix, then so is 
\begin{equation}\label{eq:cantormills}\begin{pmatrix} A & A & I\\ A & -A & 0\end{pmatrix}.
\end{equation}
By putting $A_0=(1)$ and recursing this formula, we obtain an identification matrix $A_k$ with $2^k$ rows and $(k+2)2^{k-1}$ columns. Thus, we can identify which out of $n=(k+2)2^{k-1}$ coins are counterfeit by using $2^k \sim 2 n/\log_2 n$ \emph{signed} weighings, or, by weighing the $+1$s and $-1$s separately, using $\sim 4 n/\log_2 n$ (unsigned) weighings. Using a more careful analysis, the authors show that $\sim 2 n/\log_2 n$ weighings suffice.

It was shown in \cite{ERcoin} that $2n/ \log_2 n$ is best possible, up to lower-order terms, for \emph{non-adaptive} strategies. It is a central open problem to determine the optimal constant for general strategies, but it is currently not known whether adaptiveness can be used to get a leading term improvement.

Knuth \cite{Knuth77} studied optimal strategies for the commercially available version of Mastermind, consisting of four positions and six colors. He showed that the optimal deterministic strategy needs $5$ guesses in the worst case. In the randomized setting, it was shown by Koyama \cite{Koyama93} that optimal strategy needs in expectation $5625/1296 = 4.34\dots$ guesses for the worst-case distribution of codewords.

The generalization of Mastermind to $k$ colors and $n$ positions first appeared in the scientific literature in 1983 in a paper by Chv\'atal \cite{chvatal1983mastermind}, who attributed the idea to Pierre Duchet. Here the entropy lower bound states that $\Omega(n \log k/ \log n)$ guesses are necessary. For $k\leq n^{1-\varepsilon}$, Chv\'atal showed that a simple random guessing strategy uniquely determines the codeword within a constant factor of the entropy bound.

For larger $k$, less has been known. For $k$ between $n$ and $n^2$, Chv\'atal showed that $2n\log_2 k + 4n$ guesses suffice. For any $k\geq n$, this was improved to $2n \log_2 n +2n+\lceil k/n\rceil + 2$ by Chen, Cunha, and Homer \cite{CCH96}, further to $n\lceil \log_2 k\rceil + \lceil (2-1/k)n\rceil + k$ by Goodrich \cite{Goo09}, and again to $n\lceil \log_2 n\rceil -n+k+1$ by J\"ager and Peczarski \cite{JP11}. As a comparison, we note that if $k=k(n)$ is polynomial in $n$, then the entropy lower bound is simply $\Omega(n)$. This gap is a very natural one as Doerr, Doerr, Sp\"ohel, and Thomas \cite{DDST16} showed in a relatively recent paper that if one uses a non-adaptive strategy, there is in fact a lower bound of $\Omega(n \log n)$ when $k = n$. In the same paper, they also use an adaptive strategy to significantly narrow this gap, showing that $\OO(n \log \log n)$ guesses suffice for $k=n$. Moreover, they present a randomized reduction from black-white-peg Mastermind to black-peg Mastermind which shows that
$$ \operatorname{bwmm}(n, k) = \Theta( k/n + \operatorname{bmm}(n, n)),$$
for any $k\geq n$ where $\operatorname{bwmm}(n, k)$ denotes the randomized query complexity for black-white-peg Mastermind with $k$ colors and $n$ positions, and where $\operatorname{bmm}(n, n)$ denotes the randomized query complexity for black-peg Mastermind with $n$ colors and positions. As a consequence, they concluded that $\operatorname{bwmm}(n, k)=\OO(n \log \log n + k/n)$ for all $k$ and $n$.

Stuckman and Zhang \cite{SZ06} showed that it is NP-hard to determine whether a sequence of guesses with black and white peg answers is consistent with any codeword. The analogous result was shown by Goodrich \cite{Goo09} assuming only black-peg answers are given. It was shown by Viglietta \cite{Vig11} that both of these results hold even for $k=2$.

Variations of Mastermind have furthermore been proposed to model problems in security, such as revealing someone's identity by making queries to a genomic database \cite{Googene}, and API-based attacks to determine bank PINs  \cite{FL10}.

\section{Proof outline}
\label{sec:proofoutline}

The proof of Theorem \ref{thm:main} can be broken up in two main steps. In the first step, contained in Section \ref{sec:manycol}, we reduce the traditional form of Mastermind to what we call \emph{signed permutation Mastermind}. This can be described as the variation of Mastermind on $n$ positions and colors where
\begin{enumerate}
\item codemaker is restricted to choosing the codeword $c$ to be a permutation of $[n]$, and 
\item codebreaker can make signed queries $q\in\{-n, \dots n\},$ where, after each query, codemaker must respond with the value
$$\hat{b}(q) = \hat{b}_c(q) := |\{i \in [n] | c_i = q_i \}| -  |\{i \in [n] | c_i = - q_i \}| .$$
\end{enumerate}
In other words, codebreaker can decide, for each position in a query, whether a correct guess should count as a $+1$ or as a $-1$. Note also that codebreaker can choose to leave an entry in a query ``blank'' by giving it the value $0$, in which case it will never contribute to the returned value.

This is all done in preparation for the second step, contained in Section \ref{sec:main}, where we take a more constructive approach and recursively as well as deterministically provide a set of adaptive queries that give the answer to the signed permutation Mastermind problem.

Our approach to determining the codeword in this version of Mastermind can be described as resolving $\OO(n \log n)$ subtasks of the form ``Given that a color $x$ is present in an interval $I$, determine whether $x$ is present in the left or right half of $I$''. In other words, we attempt a binary search for each color. We can make such a task part of a query by putting $q_i=x$ for all indices $i$ in the left half of $I$, and putting $q_i=0$ in all indices $i$ in the right half of $I$. Then this will contribute a with $+1$ to the answer of the query if color $x$ is in the left half of $I$, and $0$ and if $x$ is in the right half of $I$. Clearly, a single task can be resolved after a single query, however the challenge is to find a way to resolve these tasks by performing only $\OO(n)$ queries.

Similar to the algorithm by Doerr, Doerr, Sp\"ohel, and Thomas \cite{DDST16}, we will base our solution on coin-weighing schemes. Here, we think of the $\OO(n \log n)$ tasks described above as our coins, where a coin has the value $1$ if the color in the corresponding task is present in the left half of its interval, and $0$ otherwise. Weighing a collection of coins together corresponds to making one query where each of the corresponding tasks is encoded as described above. Note that if there were no further restrictions on how tasks could be queried together, then any of the classical solutions to coin-weighing would let us resolve the $\OO(n \log n)$ tasks in $\OO( n \log n / \log( n \log n)) = \OO(n)$ queries, which is the conclusion we want, but under far too weak assumptions. The classical coin-weighing problem assumes that \emph{any} collection of coins can be weighed together. This is far from true in the problem at hand. First, we cannot encode two tasks into the same query if their corresponding intervals overlap. Second, tasks depend on each other in the sense that certain tasks are only available for querying after another task is resolved. For instance, we cannot query in which quarter of the codeword the color red is present until we have first determined in which half of the codeword it is in. This is of particular concern for tasks corresponding to big intervals as, on the one hand, they, intuitively, have little potential to be resolved in parallel, and on the other hand, these are the only tasks that are available initially.

One natural approach to circumvent this is to resolve the tasks ordered by the interval size from large to small, layer by layer. That is, we first determine which half each color is present in by querying colors one at a time. We then determine which quarter each color is present in two at a time, and so on. That is, in the $i$th layer we query $2^i$ colors at a time. Using optimal coin-weighing schemes, this can be done in $\OO( (n/2^i) \cdot 2^i/\log(2^i) )=\OO(n/i)$ queries. Alas, this is too slow as summing this over all layers $i=0, \dots, \log_2 n$ gives a total of $\OO(n\log \log n)$ queries. In order to speed this up to $\OO(n)$ queries, we need to find a novel take on coin-weighing schemes where intervals of all different sizes are queried together in one big phase, which respects the dependencies between tasks.

In fact, our solution to this problem is surprisingly elegant. We start by performing a procedure we call \emph{preprocess}. The aim of which is to use $\OO(n)$ queries (so far without any information-theoretic speedup) in order to determine the positions of colors sufficiently well so that, after this point, it is possible to query $n^{\Omega(1)}$ colors together. We then perform a procedure we call \emph{solve} that employs the parallelism unlocked by \emph{preprocess} to determine the codeword in an additional $\OO(n)$ queries. This procedure is recursively constructed in a manner similar to divide and conquer algorithms, and using a technique similar to the coin-weighing scheme by Cantor-Mills \cite{cantor1966determination} to achieve the information-theoretic speedup.

Section \ref{sec:general} is then dedicated to the general case where the number of colors k and number of positions n may differ Mastermind. We prove Theorem \ref{thm:general}. This can be seen as a direct consequence of Theorem \ref{thm:main} and applying previous results by Doerr, Doerr, Sp\"ohel, and Thomas \cite{DDST16}.

\section{Signed Permutation Mastermind}
\label{sec:manycol}
In the following section, we show how to reduce the traditional form of Mastermind to \emph{signed permutation Mastermind}, as defined in Section \ref{sec:proofoutline}.

\subsection{Finding a zero query}
We first show how one can simulate ``blank'' guesses in Mastermind.  To achieve this, it suffices to find a query $z$ such that $b(z)=0$, which can be done as follows.

\begin{lemma}\label{lem:zero} 
For any $k\geq 2$, it is possible to find a string $q$ with $b_c(q)=0$ using at most $n+1$ queries.
\end{lemma}

\begin{proof}
Query the string of all $1$s, $t^{(0)}$, as well as the strings $t^{(i)}$ for all $i \in \{1, .. , n\}$ where $t^{(i)}$ is the string of all ones except at the $i$th position it has a $2$. Now if $b_c(t^{(i)}) = b_c(t^{(0)}) -1$, then $c_i\neq 2$, otherwise $c_i\neq 1$ and we have at every position a color that is incorrect, so we have a string $z$ which satisfies $b_c(z)=0$.
\end{proof}

Note that if $k=n$ and we are content with a randomized search then we can find an all-zero string by choosing queries $z\in [n]^n$ uniformly at random until a query is obtained with $b_c(z)=0$. As the success probability of one iteration $(1-1/n)^n\geq 1/4$, this takes on average $4$ guesses.

\subsection{Finding pairwise elementwise distinct one queries}
\label{subsec:perm}

We say that a set of queries $f^{(1)}, f^{(2)}, \dots$ are pairwise elementwise distinct if $f^{(s)}_i\neq f^{(t)}_i$ for all $i \in [n]$ and all $s\neq t$. 

The second part of the reduction is to show how codebreaker can transform the problem to the setting where the codeword is a permutation of $[n]$. It turns out codebreaker can achieve this by first finding $n$ pairwise elementwise distinct queries $f^{(1)}, \dots f^{(n)}$ such that $b_c(f^{(i)})=1$ for all $i$. This can be done with high probability using $\OO(n)$ random queries in the following fashion. 

This argument is due to Angelika Steger (from personal communication).

\begin{algorithm}[t]
\caption{{$ Permutation $}} \label{alg:permutation}
\KwOut{$n$ pairwise elementwise distinct strings $f^{(1)}, \dots , f^{(n)}$ such that each string contains exactly one correct position}
\For{ $i \in [n]$ }{
$S_i^{(1)} = [n]$\;
}
t = 1\;
\While{$t \le n$}{
	$X \leftarrow $ Choose a random color for each position $i$ uniformly from $S_i^{(t)}$ for each $i\in [n]$ \;
	\If{$b(X) = 1$}{
		$f^{(t)} \leftarrow X$ \;
		\For{$i \in [n]$ }{
		$S_i^{(t+1)} = S_i^{(t)} \setminus \{X_i\}$ \;
		}
		$t = t+1$;
	}
}
\Return $f^{(1)}, \dots , f^{(n)}$
\end{algorithm}

\begin{lemma}\label{lem:disj}
Algorithm \ref{alg:permutation} will, with high probability, need at most $\OO(n)$ many queries to identify pairwise elementwise distinct query strings $f^{(1)}, \dots, f^{(n)}$ of length $n$ such that $b_c(f^{(i)})=1$ for all $i\in[n]$. 
\end{lemma}

\begin{proof}
The finding of these strings is done by random queries. Let $S^{(1)} = [n]^n$ start out to be the entire space of possible queries. Sample uniform random queries $X$ from $S^{(1)}$ until one of them gives $b(X) = 1$. Set this query to be $f^{(1)}$. Now set aside all colors at the corresponding positions and keep querying. That is $S^{(2)} = [n]\backslash \{f^{(1)}_1\} \times ... \times [n]\backslash \{f^{(1)}_n\}$ and sample again random queries $X$ from $S^{(2)}$ until one of them gives $b(X)= 1$. In this way set aside $f^{(i)}$ which was received by querying randomly from $S^{(i)} = [n]\backslash \{f^{(1)}_1,..,f^{(i-1)}_1\} \times ... \times [n]\backslash \{f^{(1)}_n,..,f^{(i-1)}_n\}$.

We analyze how many queries this takes. The set $S^{(i)}$ has $n-i+1$ many possible colors at every position and also $n-i+1$ many positions at which there is still a correct color available. So for each position where there still is an available correct color, there is a chance of $1/(n-i+1)$ that this is guessed correctly in $X$, independently of every other position. So the probability that $b(X) = 1$ for a random query is 
\begin{equation} 
\Pr[b(X) = 1] = (n-i+1) \cdot \frac{1}{n-i+1}  \left(1 - \frac{1}{n-i+1}\right)^{n-i} \ge e ^{-1}. \label{eq:coupon}
\end{equation}

Let $Y_t$ be the number of queries it takes to find the $t$th string to set aside. Then $Y_t$ is geometrically distributed and the total time is the sum of all $Y_t$, $t\in [n]$, which is an independent sum of geometrically distributed random variables with success probabilities as in (\ref{eq:coupon}), so expected at most $e^{-1}$. Applying the Chebychev inequality gives us that, w.h.p. we can find $f^{(1)}$ to $f^{(n)}$ with $\OO(n)$ many queries.
\end{proof}

\subsection{Reduction to Signed Permutation Mastermind}
\label{subsec:reduction}

The previous subsections set the stage for the following lemma, which shows the dependency between the signed permutation Mastermind and the black-peg Mastermind Problem. With only $\OO(n)$ additional queries that result from Lemma~\ref{lem:disj} it is possible to solve black-peg Mastermind assuming we can solve signed permutation Mastermind with the same number of queries up to a constant factor. 
We denote by $\spmm(n)$ the minimum number of guesses needed by any deterministic strategy to solve signed permutation Mastermind.

\begin{lemma}\label{lem:perm}
Black-peg Mastermind with $n$ colors and slots can be solved in $\OO(n + \spmm(n))$ guesses with high probability.
\end{lemma}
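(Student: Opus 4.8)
The plan is to reduce ordinary black-peg Mastermind to signed permutation Mastermind by paying an additive $\OO(n)$ overhead in queries, using the preprocessing lemmas already established. First I would apply Lemma~\ref{lem:disj} to obtain, using $\OO(n)$ queries with high probability, $n$ pairwise disjoint strings $f^{(1)}, \dots, f^{(n)} \in [n]^n$ with the property that each position $j\in[n]$ of the codeword $c$ agrees with exactly one $f^{(i)}$ at position $j$. Define a new ``color'' alphabet by identifying color $i$ with the string $f^{(i)}$: a query in the reduced game is a permutation $\pi$ of $[n]$, which we realize as the concrete Mastermind query $q$ with $q_j = f^{(\pi^{-1}(j))}_j$ (at each position $j$ we place the $j$-th coordinate of the string assigned to position $j$). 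The disjointness guarantees that $b_c(q) = |\{j : \pi(i_j^*) = j\}|$ where $i_j^*$ is the unique index with $f^{(i_j^*)}_j = c_j$; in other words, if we let $\sigma\in S_n$ be the hidden permutation sending $j\mapsto i_j^*$, then the black-peg answer to the query ``permutation $\pi$'' is exactly the number of fixed points of $\pi\circ\sigma$. So after the preprocessing we are playing permutation Mastermind on an unknown permutation $\sigma$, and recovering $\sigma$ lets us reconstruct $c$ and submit it as the final (winning) query.

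Next I would remove the ``signed'' assumption, i.e. show that a signed query in permutation Mastermind can be simulated by two unsigned permutation queries plus a fixed zero reference. Using Lemma~\ref{lem:zero} (or the randomized remark after it), obtain with $\OO(n)$ extra queries a fixed string $z$ with $b_c(z)=0$; in the permutation-coordinates this is a reference we can splice in to mean ``guess blank here.'' Given a signed query $q\in\{-n,\dots,n\}^n$, write $q = q^+ - q^-$ where $q^+$ keeps the positions with $q_i>0$ (and puts the blank/zero reference elsewhere) and $q^-$ keeps $|q_i|$ at the positions with $q_i<0$ (and blanks elsewhere). Then $b_c(q) = b_c(q^+) - b_c(q^-)$, since the blank positions contribute $0$ by the choice of $z$ and the non-blank positions contribute exactly the signed count by definition. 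Hence one signed query costs two unsigned ones, and a winning strategy for signed permutation Mastermind using $\spmm(n)$ queries yields an unsigned strategy using $2\,\spmm(n)$ queries. Altogether the query count is $\OO(n)$ (preprocessing from Lemmas~\ref{lem:disj} and~\ref{lem:zero}) plus $2\,\spmm(n) = \OO(\spmm(n))$ for the main phase, plus one final query to submit $c$, which is $\OO(n+\spmm(n))$ as claimed; the high-probability qualifier comes solely from Lemma~\ref{lem:disj}.

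The step I expect to need the most care is verifying the identity $b_c(q) = |\{j:\pi(i_j^*)=j\}|$ for the reduced game, i.e.\ that treating the disjoint strings as colors genuinely turns black-peg answers into fixed-point counts of a permutation. The subtlety is that a general Mastermind query built by mixing coordinates of different $f^{(i)}$ could in principle match $c$ at some position $j$ ``by accident'' through a coordinate coming from a string other than the intended one; but disjointness of the $f^{(i)}$ at each coordinate (which is exactly what Algorithm~\ref{alg:permutation} guarantees, since $S_i^{(k+1)} = S_i^{(k)}\setminus\{rand_i\}$) rules this out: at position $j$ the value $c_j$ equals $f^{(i)}_j$ for exactly one $i$, so a match at $j$ occurs if and only if the string assigned to $j$ is that unique $f^{(i)}$. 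Once this bijection-to-permutations is pinned down, the rest is bookkeeping: the sign decomposition and the additive accounting of the $\OO(n)$ preprocessing queries are routine.
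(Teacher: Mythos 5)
Your proposal is correct and follows essentially the same route as the paper: use Lemma~\ref{lem:disj} to remap colors to the disjoint strings $f^{(1)},\dots,f^{(n)}$ so the codeword becomes a permutation, then use the zero string from Lemma~\ref{lem:zero} to simulate each signed query by two unsigned queries $q^+$ and $q^-$ with $b(q)=b(q^+)-b(q^-)$. Your verification that black-peg answers become fixed-point counts of a permutation is actually spelled out more carefully than in the paper's own proof, but the argument is the same.
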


\begin{proof}
First apply the algorithms from Lemmas \ref{lem:zero} and \ref{lem:disj} to obtain the corresponding queries $z$ and $f^{(1)}, \dots, f^{(n)}$. Given this, run any optimal solution to signed permutation Mastermind. Whenever this solution wants to perform a query $q\in \{-n, \dots, n \}^n$, we construct queries $q^+, q^- \in [n]^n$ according to
$$q^+_i = \begin{cases} f^{(q_i)}_i &\text{ if }q_i>0\\ z_i &\text{otherwise,}\end{cases}$$
and
$$q^-_i = \begin{cases} f^{(-q_i)}_i &\text{ if }q_i<0\\ z_i &\text{otherwise.}\end{cases}$$
We consequently query $b_c(q^+)$ and $b_c(q^-)$ and return the value $b_c(q^+)-b_c(q^-)$ as the answer to query $q$.

We want to show that the answers given by this procedure are equal to $\hat{b}_{c'}(q)$ for some permutation $c'$ of $[n]$ not depending on $q$. Let $\varphi : [n]^n \rightarrow [n]^n $ be the map given by $\varphi(x)_i = f^{(x_i)}_i$. As $f^{(1)}_i, \dots, f^{(n)}_i$ are all distinct values between $1$ and $n$, by construction, it follows that $\varphi$ acts as a bijection on each position of the input. In fact, we claim that 
\begin{equation}\label{eq:toperm}
\hat{b}_{\varphi^{-1}(c)}(q)=b_c(q^+)-b_c(q^-) \qquad\forall q\in\{-n, \dots, n\}^n.
\end{equation}
In other words, any signed query $q$ made in this setting will be answered as if the codeword is $c':=\varphi^{-1}(c).$ Observe that \eqref{eq:toperm} implies that $c'$ is a permutation as $\hat{b}_{c'}(i\dots i)$ is, by the definition of $\hat{b}_{c'}(\cdot)$, equal to the number of occurrences of color $i$ in $c'$, and by definitions of $b_c(\cdot), q^+$ and $q-$ equal to $b_c(f^{(i)})-b_c(z)=1-0=1$, so each color appears exactly once in $c'$.

To see that \eqref{eq:toperm} holds, let $q\in \{-n, \dots, n\}^n$ let $q^+, q^-$ be as above, and consider the contributions to $\hat{b}_{c'}(q)$ and $b_c(q^+)-b_c(q^-)$ respectively from the $i$th position. If $q_i=0$, then $q^+_i=q^-_i = z_i$ where, by choice of $z$, $c_i\neq z_i$, so the contribution to both expressions is $0$. If $q_i>0$, then the contribution from the $i$th position to $\hat{b}_{c'}(q)$ is $1$ if $c'_i = \varphi^{-1}(c)_i = q_i$, and $0$ otherwise. Similarly, the contribution to $b_c(q^+)-b_c(q^-)$ is $1$ if $c_i = q^+_i = f^{(q_i)}_i = \varphi(q)_i$, and $0$ otherwise. One immediately checks because $\varphi$ is a bijection at any position that the two conditions are equivalent. This works analogously if $q_i<0$.

As the answers given to the signed permutation Mastermind solution are consistent with $\hat{b}_{c'}(q)$, the solution will terminate by outputting the string $c'$. We can consequently compute the codeword $c$ to the original game according to $c=\varphi(c')$.

With high probability, this process uses only $\OO(n)$ queries to obtain $z, f^{(1)}, \dots, f^{(n)}$. Additionally, it uses two times the number of queries used by the signed permutation Mastermind strategy in order to solve the corresponding signed permutation Mastermind instance, which then obtains the codeword for the original game.
\end{proof}

\section{Proof of Theorem \ref{thm:main}}
\label{sec:main}

\begin{figure}[t]
\center
\includegraphics[width = 0.8\textwidth]{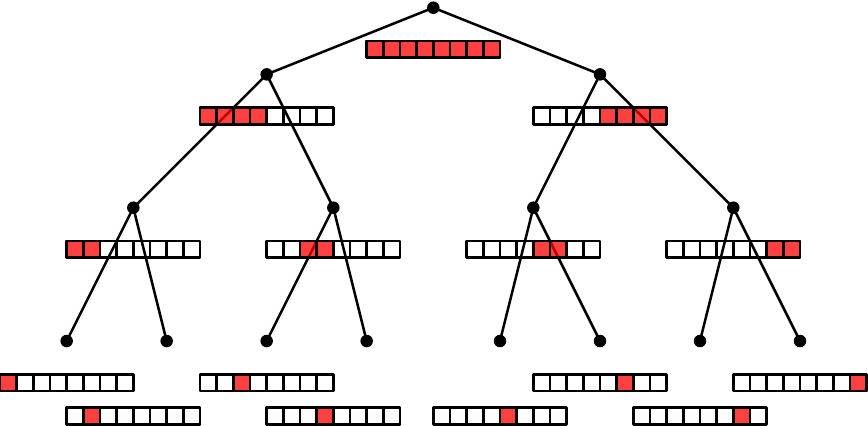}
\caption{Information Tree for $n= 8$. Every node corresponds to an interval, here marked in red.}
\label{fig:informationtree}
\end{figure}

With the reduction from the previous section at hand, we may assume that each color appears in the hidden codeword exactly once. It remains to show that we can determine the position of every color by using $\OO(n)$ signed queries. Before presenting our algorithm, we will first present a token sliding game that will be used to housekeep, at any point while playing signed permutation Mastermind, the information we currently have for each color.

\begin{definition}
Given an instance of signed permutation Mastermind, we define the corresponding information tree $T$ as a rooted complete balanced binary tree of depth $\lceil \log_2 n \rceil$. We denote by $n_T$ the number of leaves of the tree. In other words, $n_T$ is the smallest power of two bigger than or equal to $n$. For each vertex in $T$, we associate a (sub-)interval of $[n_T]$ as follows. We order the vertices at depth $d$ in the canonical way from left to right and associate the $j$th such vertex with the interval $[(n_T/2^d )(j-1)+1, (n_T/2^d )j]$. 
\end{definition}

Note that if $n$ is not a power of two, some vertices will be associated with intervals that go outside $[n]$. An example of an information tree for $n=8$ is illustrated in Figure~\ref{fig:informationtree}.

We introduce handy notation for the complete binary tree $T$. The root of $T$ is denoted by $r$. For any vertex, we denote by $v_L$ and $v_R$ its left and right child respectively if they exist and if we descend multiple vertices we write $v_{LR}$ for $(v_L)_R$. Further $T_L$ denotes the induced subtree rooted at $r_L$, similarly $T_\star$ is the the induced subtree rooted at $r_\star$ for $\star$ being any combination of $R$ and $L$ such that $r_\star$ exists.

For any instance of signed permutation Mastermind, we perform the following \emph{token game} on the information tree as follows. We initially place $n$ colored tokens at the root $r$, one for each possible color in our Mastermind instance. At any point, we may take a token at position $v$ and slide it to either $v_L$ or $v_R$, if we can prove that the position of its color in the hidden codeword lies in the corresponding sub-interval. See Figure \ref{fig:tree01} for an example.

\begin{observation} \label{obs:finishleaf}
When all tokens are positioned on leaves of $T$, we know the complete codeword.
\end{observation}

\begin{figure}[t]
\center
\includegraphics[width = 0.7\textwidth]{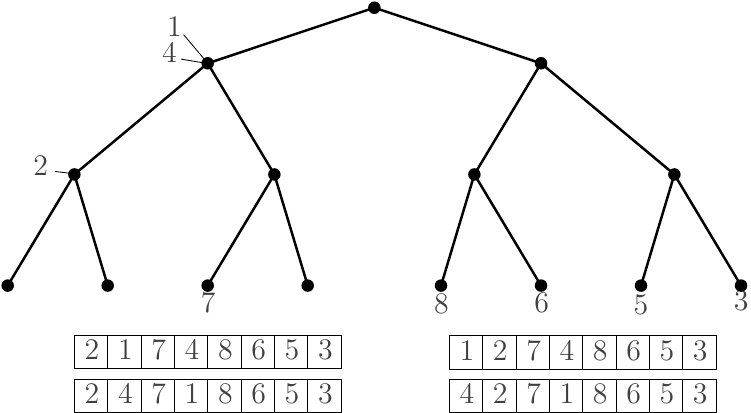}
\caption{Example configuration for $n= 8$. Tokens of colors $1$ through $8$ are at different positions in the information tree and there are 4 remaining possibilities for the codeword.}
\label{fig:tree01}
\end{figure}

The simplest way to move a token is by performing a query that equals that color on, say, the left half of its current position, and zero everywhere else. We call this step querying a token.

\begin{definition}\label{def:weighing}
For a color $f$ with its token at non-leaf node $v$, we say we query the color $f$ if we make a query of the color $f$ only in the left half of the interval corresponding to $v$ (zero everywhere else).
\end{definition}

We note that any query of this form can only give $0$ or $1$ as output. We will refer to any such queries as \emph{zero-one queries}.

\subsection{Solving Signed Permutation Mastermind}

We are now ready to present our main strategy to solve signed permutation Mastermind by constructing a sequence of entropy dense queries that allows us to slide all tokens on $T$ from the root to their respective leaves. This will be done in two steps, which we call \texttt{Preprocess($T$)} and \texttt{Solve($T$)}. 

 As intervals corresponding to vertices close to the root of $T$ are so large, there is initially not much room to include many colors in the same query. Thus the idea of the preprocessing step is to perform $\OO(n)$ \emph{zero-one queries}, in order to move some of the tokens down the left side of the tree.

\begin{algorithm}[h]
\caption{\texttt{Preprocess($T$) }} \label{alg:pre}
\KwIn{Tree $T$}
\KwOut{Preprocessed tree T}
\If{$n_T \le 2$}{Use $1$ query to move all tokens to leafs of $T$\; \Return T\;}
\For{\text{\bf{each}} token $t$ at $r$}{Query $t$ and slide token to either $r_L$ or $r_R$\;
}
\For{\text{\bf{each}} token $t$ at $r_L$}{Query $t$ and slide token to either $r_{LL}$ or $r_{LR}$\;
}
Run \texttt{Preprocess($T_{LL}$)}\;
Run \texttt{Preprocess($T_{LR}$)}\;
\Return {T}\;
\end{algorithm}

\texttt{Preprocess($T$)}, see Algorithm \ref{alg:pre}, takes the tree, queries (Definition~\ref{def:weighing}) all colors whose tokens are at the root $r$ and slides the tokens accordingly. Then repeats for the left child of the root $r_L$. Then we recursively apply the algorithm to the subtrees of the left two grandchildren of the root $T_{LL}$ and $T_{LR}$. If the tree has a depth of $2$ or less, we skip all the steps on vertices that do not exist.

\begin{proposition} \label{prop:preprocess}
The Algorithm~\ref{alg:pre} \texttt{Preprocess($T$)} requires at most $3n_T$ zero-one queries and runs in polynomial time.
\end{proposition}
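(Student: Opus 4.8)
The plan is to set up a recurrence for the number of zero-one queries used by \texttt{Preprocess($T$)} and show it solves to at most $3n_T$. Let $P(m)$ denote the worst-case number of queries the algorithm makes on a tree with $m$ leaves, where $m$ is a power of two. The base case $m \le 2$ uses a single query, so $P(1) = P(2) = 1 \le 3m$. For $m \ge 4$, I would track how many tokens sit at each node during execution. Initially all $n \le m$ tokens are at the root $r$; the first \textbf{for} loop queries each of them once, costing at most $m$ queries, and distributes them among $r_L$ and $r_R$. The second \textbf{for} loop queries each token now at $r_L$; since at most $m$ tokens total were split between $r_L$ and $r_R$, this costs at most $m$ more queries, and these tokens end up at $r_{LL}$ or $r_{LR}$. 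After these two rounds, every token is either at $r_R$, at $r_{LL}$, or at $r_{LR}$. The recursive calls \texttt{Preprocess($T_{LL}$)} and \texttt{Preprocess($T_{LR}$)} each operate on a tree with $m/2$ leaves, so by induction they cost at most $3(m/2)$ queries each. Crucially, the algorithm makes no recursive call on $T_R$, so tokens sitting at $r_R$ incur no further cost in this procedure.

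Putting the pieces together gives the recurrence
\[
P(m) \le m + m + 2 \cdot P(m/2) = 2m + 2 P(m/2),
\]
with $P(2) \le 1$. The hard part — really the only subtlety — is getting the constant to come out to exactly $3$ rather than something like $2\log_2 m$: a naive unrolling of $P(m) = 2m + 2P(m/2)$ would give $P(m) \approx 2m\log_2 m$, which is far too weak. The resolution is to be more careful about the token count feeding into the recursive calls. Since the first loop queries exactly the tokens at $r$ and each such token moves to $r_L$ or $r_R$, and the second loop queries exactly the tokens that landed at $r_L$, the total number of queries in the two loops is (number of tokens at $r$) + (number of tokens that moved to $r_L$). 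If we let $a$ be the number of tokens that go to $r_L$ and $b = n - a$ the number that go to $r_R$, the two loops cost exactly $n + a$ queries, and the recursive calls act on the $a$ tokens now distributed in $T_{LL} \cup T_{LR}$ (a tree-pair with $m/2$ leaves each, but only $a$ tokens total). So one should prove the sharper statement: \emph{on a tree with $m$ leaves carrying $t \le m$ tokens, \texttt{Preprocess} uses at most $2t + \text{(something} \le m)$ queries}, or more cleanly, bound the total by $3n_T$ directly by charging queries to leaves.

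Concretely I would prove by induction the statement ``\texttt{Preprocess($T$)} on a tree with $n_T$ leaves uses at most $2n_T + (\text{number of tokens})$ zero-one queries,'' noting the token count is always at most $n_T$, which yields the clean bound $3n_T$. The inductive step: the two loops cost $n + a$ where $n$ is the current token count and $a \le n$ is the number sent left; the recursion on $T_{LL}$ and $T_{LR}$ (total $n_T$ leaves, $a$ tokens) costs at most $2 \cdot (n_T/2) + a = n_T + a$ by induction applied to each subtree and summing — more precisely $\left(2 \cdot \tfrac{n_T}{4} + a_{LL}\right) + \left(2 \cdot \tfrac{n_T}{4} + a_{LR}\right) = n_T + a$ where $a_{LL} + a_{LR} = a$. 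Hence the total is at most $(n + a) + (n_T + a) \le n + n_T + 2n = n_T + 3n \le 4n_T$ — which is slightly loose, so instead I would directly charge: each token is queried exactly once at each depth it passes through under the ``query-then-slide'' loops, and each token participates in at most a bounded number of such loops before entering a non-recursed subtree; counting these charges over all $n \le n_T$ tokens and all levels gives the $3n_T$ bound. Finally, polynomial running time is immediate since the recursion tree has $\OO(n)$ nodes and each query is constructed in $\OO(n)$ time.
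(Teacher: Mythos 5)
There is a genuine gap: your argument never uses the fact that the codeword is a permutation, and without it the constant $3$ is out of reach. The paper's proof rests on the observation that the number of tokens that can ever sit at a vertex $v$ is at most the length of the interval associated with $v$ (a token is at $v$ only if its color's position lies in that interval, and in a permutation each position carries exactly one color). Hence at most $n_T/2$ tokens can be at $r_L$, so the second loop costs at most $n_T/2$ rather than $n_T$, and the recurrence is $a(n_T)\le n_T + n_T/2 + 2a(n_T/4)$, which closes tightly with $a(n_T)\le 3n_T$ since $n_T + n_T/2 + 2\cdot 3n_T/4 = 3n_T$. In your notation the missing fact is exactly $a\le n_T/2$: substituting it into your own ``$2n_T+t$'' induction turns your total $t+2a+n_T$ into at most $t+2n_T$ and completes the proof, whereas your bound $a\le n$ only yields $4n_T$, as you concede.

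Neither of your proposed repairs works as stated. First, you twice say that $T_{LL}$ and $T_{LR}$ have $m/2$ leaves each; they have $m/4$ each (they are rooted at grandchildren of $r$), so your displayed recurrence $P(m)\le 2m+2P(m/2)$ is not the recurrence of the algorithm at all — the correct naive version $P(m)\le 2m+2P(m/4)$ already gives $\OO(m)$ (namely $4m$), so the ``$2m\log_2 m$ blowup'' you worry about is a red herring. Second, your final charging argument relies on the claim that each token participates in a bounded number of query-then-slide rounds; this is false. A token whose color sits at position $1$ goes left at every step and is queried at every depth, i.e.\ $\Theta(\log n_T)$ times. The correct accounting is per level, not per token: the numbers of tokens queried at successive depths are at most $n_T,\ n_T/2,\ n_T/2,\ n_T/4,\ n_T/4,\ n_T/8,\dots$, summing to $3n_T$ — and every one of these level bounds is again the interval-length/permutation observation you are missing.
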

\begin{proof}
Clearly, if the depth of the tree is $\le 2$ this holds, as we make only a single zero-one query. Then the rest follows by induction, if we analyse the number of queries needed we see, at the root $r$ we need to query at most $n_T$ tokens, and at the left child $r_L$ we query at most $n_T/2$. In the left grandchildren, we recur. So if $a(n_T)$ is the total number of queries we need for \texttt{Preprocess($T$)} for a tree with $n_T$ leafs, then it holds that
\[a(n_T) \le n_T + n_T/2 + a(n_{T_{LL}})+ a(n_{T_{LR}}) = n_T + n_T/2 + a(n_{T}/4)+ a(n_{T}/4)\]
From which follows that $a(n_T) \le 3n_T$. Since we only query tokens all queries we do are zero-one queries and this can be done in polynomial time concluding the proof.
\end{proof}

\begin{figure}[t]
\center
\includegraphics[width = \textwidth]{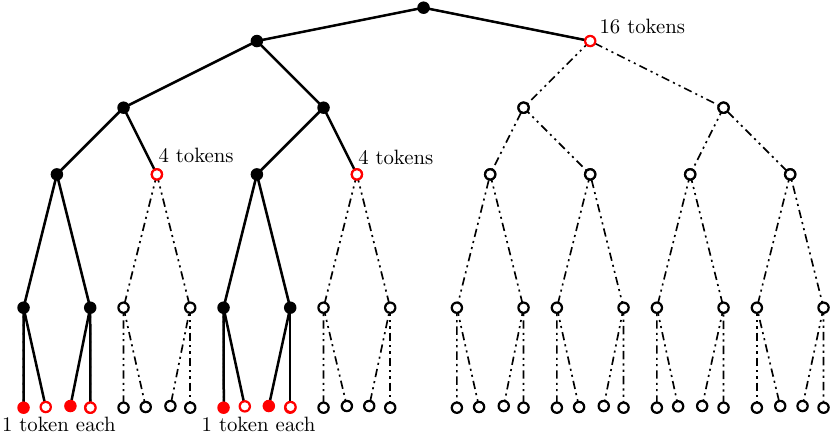}
\caption{Tree preprocessed for n=32, all black vertices have been emptied, tokens are at red vertices.}
\label{fig:pretree}
\end{figure}

The result of running \texttt{Preprocess($T$)} is illustrated in Figure \ref{fig:pretree}. All tokens have either been moved to leaves, or to a vertex of the form $r_R, r_{*R}, r_{**R}, \dots$ where each `$*$' denotes either $LL$ or $LR$. This we call a preprocessed tree. Another way of viewing this is that a tree $T$ is preprocessed if there are no tokens at $r$ or $r_L$, and the subtrees $T_{LL}$ and $T_{LR}$ are preprocessed.~\\ 

Once $T$ is preprocessed we can run the second algorithm \texttt{Solve($T$)}, see Algorithm~\ref{alg:solve}. This is constructed recursively as follows. First note that when $n_T=1$ or $2$, then the preprocessing has already put all tokens at leaves, so the problem is already solved. Now suppose we already know how to run \texttt{Solve($T'$)} for all preprocessed trees $T'$ with $n_{T'}<n$, and let $T$ be a preprocessed tree with $n_T=n$. Observe that preprocessing $T$ means that $T_{LL}$ and $T_{LR}$ are both preprocessed. Hence, we already know procedures \texttt{Solve($T_{LL}$)} and \texttt{Solve($T_{LR}$)} that move all tokens in the respective subtrees to leaves by making a sequence of queries whose support are restricted to the first and second quarter of the available positions respectively. Similarly, as the preprocessing has determined all colors that belong in the right half of the codeword, we know how to move all tokens in $T_R$ to leaves by first performing \texttt{Preprocess($T_R$)} and then \texttt{Solve($T_R$)}, both of which make queries whose support are restricted to the second half of the available positions.

In order to achieve the information-theoretic speedup of queries, the idea is to perform the queries of  \texttt{Solve($T_{LL}$)}, \texttt{Solve($T_{LR}$)} and \texttt{Preprocess($T_R$)} in parallel. This can be thought of as follows. Codebreaker runs the respective algorithms until each of them attempts to make a query. Instead of actually asking codemaker about these queries, codebreaker simply notes the query and pauses the respective algorithm. This continues until all three have proposed queries. Suppose the queries given are $q^{(1)}, q^{(2)}$ and $s$ respectively. Codebreaker combines these into two queries, determined by Lemma \ref{lem:comb} explained later, that are asked to codemaker, and given the answers, codebreaker computes $\hat{b}(q^{(1)})$, $\hat{b}(q^{(2)})$ and $\hat{b}(s)$ that are then presented to the respective algorithms \emph{as if} they were given from codemaker as answers to the respective queries, after which the algorithms are allowed to continue. This is repeated until all three algorithms have terminated. (In case one of the algorithms terminates while another still makes queries, we can simply treat the terminated algorithm as if it is making the all zeros query until the other ones finish.) Finally, this leaves $T_{LL}$ and $T_{LR}$ solved and $T_R$ preprocessed. Hence we can solve $T$ by running \texttt{Solve($T_R$)} (without any parallelism).

In order to combine queries as mentioned above, we employ an idea similar to the Cantor-Mills construction \eqref{eq:cantormills} for coin-weighing.

\begin{lemma}\label{lem:comb}
Define the \emph{support} of a query $q$ 
as the set of indices $i\in [n]$ such that $q_i\neq 0$. For any three queries $q^{(1)}, q^{(2)}$, and $s$ with disjoint supports and such that $s$ is a zero-one query, we can determine $\hat{b}(q^{(1)}), \hat{b}(q^{(2)}),$ and $\hat{b}(s)$ by making only two queries.
\end{lemma}

\begin{proof}
We query $w^{(1)} = q^{(1)} + q^{(2)} + s $ and $w^{(2)} = q^{(1)} - q^{(2)}$, where $+$ and $-$ denote element-wise addition subtraction respectively. Then we can retrieve the answers from just the information of $\hat{b}(w^{(1)})$ and $\hat{b}(w^{(2)})$. If we combine the queries, $\hat{b}(w^{(2)}) + \hat{b}(w^{(2)})= 2 \hat{b}(q^{(1)}) + \hat{b}(s) $. So we can retrieve $\hat{b}(s) = \hat{b}(w^{(2)}) + \hat{b}(w^{(2)}) \mod 2$. And then also the queries, $\hat{b}(q^{(2)}) = ( \hat{b}(w^{(1)}) + \hat{b}(w^{(2)}) - \hat{b}(s)) / 2 $ and $\hat{b}(q^{(2)}) = ( \hat{b}(w^{(1)}) - \hat{b}(w^{(2)}) - \hat{b}(s)) / 2 $ are recoverable.
\end{proof}

\begin{algorithm}[t]
\caption{\texttt{Solve($T$)}} \label{alg:solve}
\KwIn{Preprocessed Tree $T$}
\KwOut{Tree $T$ where all tokens have been queried down to the leaves}
\If{$n_T \le 2$}{\Return T\;}
Run the algorithms \texttt{Solve($T_{LL}$)}, \texttt{Solve($T_{LR}$)} and \texttt{Preprocess($T_R$)} in parallel, and note every time they try to make a query \\
\While{at least one algorithm still has queries}{
Get the next queries $q^{(1)}, q^{(2)}$, and $s$ requested by the algorithms\;
Compute the answers using two queries, as in Lemma \ref{lem:comb}, and return to the respective algorithm\;
}
\texttt{Solve($T_{R}$)}\;
\Return {T}\;
\end{algorithm}

\begin{proposition} \label{prop:solve}
Calling Algorithm~\ref{alg:solve}, \texttt{Solve()}, for a preprocessed tree will move all the tokens to leaves. Moreover, the algorithm will use at most $6n_T$ queries and runs in polynomial time.
\end{proposition}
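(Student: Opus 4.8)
The plan is to prove the two claims --- correctness (all tokens reach leaves) and the query bound $6n_T$ --- by induction on $n_T$, mirroring the recursive structure of \texttt{Solve}. For the base case $n_T\le 2$: after preprocessing, the \texttt{Preprocess} step has already used one query to move all tokens to leaves (this is exactly the $n_T\le 2$ branch of Algorithm~\ref{alg:pre}), so \texttt{Solve} needs no queries and correctness is immediate. For the inductive step, I would first argue correctness. Because $T$ is preprocessed, every token sits either on a leaf or on a vertex of the form $r_R, r_{*R}, \ldots$; in particular the sets of colors whose true positions lie in the intervals of $r_{LL}$, $r_{LR}$, and $r_R$ are each known, so the three subproblems are genuinely independent and $T_{LL}, T_{LR}$ are themselves preprocessed trees. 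Hence the recursive calls \texttt{Solve($T_{LL}$)}, \texttt{Solve($T_{LR}$)} are legitimate and, by the induction hypothesis, move all tokens in those subtrees to leaves; the simulated \texttt{Preprocess($T_R$)} followed by \texttt{Solve($T_R$)} does the same for $T_R$ (using Proposition~\ref{prop:preprocess} for the preprocess part and induction for the solve part). So when the algorithm returns, every token is on a leaf.

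Next I would verify that the query-combining loop is well-defined, i.e. that at each iteration the three pending queries $q^{(1)}, q^{(2)}, s$ have disjoint supports and $s$ is a zero-one query, so that Lemma~\ref{lem:comb} applies. Disjointness of supports follows from the fact that the three simulated processes operate on the disjoint index ranges associated with $r_{LL}$, $r_{LR}$, and $r_R$; and $s$, coming from \texttt{Preprocess($T_R$)}, is always a query of a single token, hence a zero-one query by Definition~\ref{def:weighing}. A small point to address is synchronization: the three processes may request queries at different rates and may finish at different times, so I would specify (or note it is harmless) that when a process has terminated we simply treat its contribution as the all-zero query, which has empty support and is trivially a zero-one query, so Lemma~\ref{lem:comb} still applies with two real queries replaced by fewer.

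Finally, the query count. Let $b(n_T)$ denote the worst-case number of queries used by \texttt{Solve} on a preprocessed tree with $n_T$ leaves. The while-loop runs Lemma~\ref{lem:comb} once per ``round'', each round costing $2$ real queries and consuming one pending query from each still-active process; the number of rounds is therefore the maximum over the three processes of the number of queries that process wants, which is $\max\{b(n_T/4),\, b(n_T/4),\, 3\cdot(n_T/2)\}$ by the induction hypothesis and Proposition~\ref{prop:preprocess}. After the loop, the final call \texttt{Solve($T_R$)} costs at most $b(n_T/2)$ further queries. This gives the recursion
\[
b(n_T)\ \le\ 2\max\{\, b(n_T/4),\ 3n_T/2\,\}\ +\ b(n_T/2).
\]
I expect the main obstacle to be checking that the ``$\max$'' is dominated by the $3n_T/2$ term for all relevant $n_T$ --- equivalently that $b(m)\le 6m$ implies $b(m/4)\le 3\cdot 2m = 6m \le$... wait, one must check $b(n_T/4)\le 3n_T/2$, i.e. $6\cdot(n_T/4)=3n_T/2$, which holds with equality --- so the recursion simplifies to $b(n_T)\le 3n_T + b(n_T/2)$, and unrolling the geometric series yields $b(n_T)\le 3n_T(1+\tfrac12+\tfrac14+\cdots)\le 6n_T$, completing the induction. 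Polynomial running time is clear since each of the $\OO(n_T)$ queries is constructed from constantly many vector additions, and the recursion has depth $\OO(\log n_T)$ with total work linear in $n_T$ at each level.
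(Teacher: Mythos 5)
Your proposal is correct and follows essentially the same route as the paper's proof: induction on $n_T$ with the recursion $c(n_T)\le 2\max(c(n_T/4),a(n_T/2))+c(n_T/2)$ and the bound $a(n_T/2)\le 3n_T/2$ from Proposition~\ref{prop:preprocess}. You additionally spell out why Lemma~\ref{lem:comb} is applicable in every round (disjoint supports, $s$ a zero-one query, padding finished processes with zero queries), which the paper leaves implicit but which does not change the argument.
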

\begin{proof}
The first statement follows by induction. If $n_T\leq 2$ a preprocessed tree already has all its tokens at leaves, nothing needs to be done. The induction step follows by the correctness of Lemma \ref{lem:comb}.

For the runtime analysis, we also use induction. If the depth $n_T\leq 2$ then clearly the statement holds. If $n_T>2$ the procedure first runs  \texttt{Solve($T_{LL}$)}, \texttt{Solve($T_{LR}$)} and \texttt{Preprocess($T_R$)} in parallel. In each iteration of the main loop, we resolve one query from each of the subprocesses by making two actual queries. This continues until all three processes have terminated, meaning that the number of iterations is the maximum of the number of queries made by the respective subprocesses. After which, we run 
\texttt{Solve($T_{R}$)}. In total, we get the following recursion where $c(n_T)$ is the total number of queries we must make during \texttt{Solve($T$)} and $a(n_T)$ the total number of queries that \texttt{Preprocess($T$)} must make in a tree with $n_T$ leaves.
\begin{align*} 
c(n_T) &\le 2 \cdot \max(c(n_{T_{LL}}) , c(n_{T_{LR}}) , a(n_{T_{R}} )) + c(n_{T_R})\\ &=  2 \cdot \max(c(n_{T}/4) , a(n_{T}/2 )) + c(n_{T}/2)
\end{align*}
By Proposition~\ref{prop:preprocess} we know that $a(n_{T}/2)  \le 3n_T/2$ so by induction we get that $c(n_T) \le 6n_T$. All the operations as well as the computing and decoding of the combined queries in Lemma \ref{lem:comb} are clearly in polynomial time so this concludes the proof.
\end{proof}

Now we have all the tools at hand to prove our main result.

\begin{proof}[Proof of Theorem \ref{thm:main}.]
We have Lemma~\ref{lem:perm} which reduces the problem of solving black-peg Mastermind to solving signed permutation Mastermind. For the new instance of signed permutation Mastermind that results from this, we consider the information tree. We move the tokens of this tree to the leaves by first running the algorithm \texttt{Preprocess($T$)} and then applying the algorithm \texttt{Solve($T$)} on the preprocessed tree. By Propositions \ref{prop:preprocess} and \ref{prop:solve} this takes at most $9n_T$ signed queries and is done in polynomial time. By Observation~\ref{obs:finishleaf} we have found the hidden codeword of the signed permutation Mastermind, and therefore also the hidden codeword of the black-peg Mastermind game. The transformation can be done in polynomial time and by Lemma~\ref{lem:perm} we need at most $\OO(n + 9n_T) = \OO(n)$ queries to solve black-peg Mastermind. 
\end{proof}


\section{Playing Mastermind with arbitrarily many colors} \label{sec:general}

We briefly make some remarks on other ranges of $k$ and $n$ for Mastermind. By combining Theorem \ref{thm:main} with results of Chv\'atal \cite{chvatal1983mastermind} and Doerr, Doerr, Sp\"ohel, and Thomas \cite{DDST16}, we determine up to a constant factor the smallest expected number of queries needed to solve black-peg and black-while-peg Mastermind for any $n$ and $k$.


For any $n$ and $k$, let $\bmm(n, k)$ denote the minimum (over all strategies) worst-case (over all codewords) expected number of guesses to solve Mastermind if only black peg information can be used. Similarly, denote  $\bwmm(n, k)$ the smallest expected number of queries needed if both black and white peg information can be used. The following relation between black-peg and black-white-peg Mastermind was shown by Doerr, Doerr, Sp\"ohel, and Thomas.

\begin{theorem}[{Theorem 4, \cite{DDST16}}] For all $k\geq n \geq 1$,
\[\bwmm(n, k) = \Theta\left( \bmm(n, n) + k/n\right).\]
\label{thm:doerr}
\end{theorem}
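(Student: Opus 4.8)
The plan is to establish the two directions of the $\Theta$ separately, treating $\bmm(n,n)$ as a black box and using only the unconditional entropy bound $\bmm(n,n)=\Omega(n)$ (there are $n^n$ codewords and each black-peg answer has at most $n+1$ outcomes) together with the matching upper bound $\bmm(n,n)=O(n)$ from Theorem~\ref{thm:main}. The central tool in both directions is the multiset-intersection identity
\[ b_c(q)+w_c(q)=\sum_{a\in[k]}\min\bigl(\mathrm{cnt}_a(q),\,m_a\bigr), \]
where $\mathrm{cnt}_a(q)$ is the number of positions of $q$ holding color $a$ and $m_a$ is the multiplicity of $a$ in the codeword $c$; since $\sum_a m_a=n$, at most $n$ colors are present.

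\textbf{Upper bound} $\bwmm(n,k)=O(\bmm(n,n)+k/n)$. Because $\bmm(n,n)=\Omega(n)$, it suffices to give a randomized strategy using $O(n+k/n)$ expected queries. The strategy has two phases. In the \emph{identification phase} I determine the set $C\subseteq[k]$ of present colors (those with $m_a\ge1$), which satisfies $|C|\le n$. If a query uses each of its colors at most once, the identity above reads $b+w=|\{a:\text{$a$ occurs in $q$ and $m_a\ge1$}\}|$, i.e. it \emph{counts} how many of the probed colors are present. Thus identifying $C$ is exactly a quantitative group-testing (coin-weighing) problem for an $n$-sparse indicator vector in $\{0,1\}^k$, where each measurement may probe at most $n$ coordinates. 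I first partition $[k]$ into $\lceil k/n\rceil$ blocks of size at most $n$ and spend one counting query per block to learn how many present colors each block contains ($O(k/n)$ queries); discarding empty blocks, I then run a coin-weighing-style identification over the surviving candidates to pin down $C$. In the \emph{solving phase} I relabel the $\le n$ colors of $C$ as $[\,|C|\,]\subseteq[n]$, so that the codeword becomes a genuine element of $[n]^n$, and I simulate an optimal black-peg $n$-color strategy: each of its queries is realised in the $k$-color game by mapping relabelled colors back to $C$ (and any unused color to a color known to be absent) and reading off only the black pegs. This costs $\bmm(n,n)$ queries and recovers the codeword.

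\textbf{Lower bound} $\bwmm(n,k)=\Omega(\bmm(n,n)+k/n)$. For the $\Omega(k/n)$ term I use an adversary that keeps the codeword constant, $c=(a,\dots,a)$ for an unknown $a\in[k]$. Against such a codeword every query $q$ yields $w_c(q)=0$ and $b_c(q)=\mathrm{cnt}_a(q)$, so a query reveals information only about the at most $n$ distinct colors it actually contains. Hence after fewer than $(k-1)/n$ queries some color $a^\ast$ has never been probed, the answers are consistent with $c=(a^\ast,\dots,a^\ast)$, and the game cannot be finished; a standard averaging (Yao) argument turns this into an $\Omega(k/n)$ expected-query bound. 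For the $\Omega(\bmm(n,n))$ term I let the adversary restrict to codewords that are permutations of $[n]\subseteq[k]$. For such a codeword the identity gives $b_c(q)+w_c(q)=|\{a\in[n]:\mathrm{cnt}_a(q)\ge1\}|$, which depends only on $q$; thus the white peg is a deterministic function of the query and of $b_c(q)$, so black-white information is no more powerful than black-peg information on this family, and colors outside $[n]$ never match. The game therefore degenerates to black-peg Mastermind distinguishing the $n!$ permutations of $[n]$, which needs $\Omega(n)$ expected queries by the entropy bound; since $\bmm(n,n)=\Theta(n)$ this is $\Omega(\bmm(n,n))$.

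\textbf{Main obstacle.} The delicate step is the identification phase in the regime $k\gg n$ (say $k=n^2$), where the budget collapses to $O(n)$: there can be up to $n$ blocks each containing a single present color, and resolving each block independently by binary search would cost $\Theta(n\log n)$, overshooting the budget. Overcoming this requires a genuinely parallel, coin-weighing-type identification (an integer identification matrix with row weight at most $n$, decoded against the $n$-sparse support), rather than independent searches; verifying that such a design identifies every $n$-sparse support within $O(n+k/n)$ counting queries---and that the rare failures of a randomized construction can be absorbed by restarting, which is legitimate since $\bwmm$ measures \emph{expected} queries---is the technical heart of the argument.
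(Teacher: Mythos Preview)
This theorem is not proved in the paper: it is quoted from \cite{DDST16} (note the attribution in the theorem header) and then invoked as a black box in the proof of Theorem~\ref{thm:general}. There is consequently no in-paper argument to compare your proposal against.

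As for the proposal on its own merits: the two-phase upper bound and the two-adversary lower bound are the natural decomposition, and your lower-bound arguments are essentially correct. One remark: deriving $\Omega(\bmm(n,n))$ by first proving $\Omega(n)$ on permutation codewords and then invoking Theorem~\ref{thm:main} to convert $\Omega(n)$ into $\Omega(\bmm(n,n))$ is logically fine within this paper but anachronistic relative to \cite{DDST16}, which predates Theorem~\ref{thm:main}. A direct simulation avoids this: in $\bmm(n,n)$, first spend $O(n)$ black-peg queries to learn all color multiplicities $m_a$ and a zero string (Lemma~\ref{lem:zero}); thereafter, for any query $q\in[k]^n$ of a black-white $(n,k)$ strategy, replace colors outside $[n]$ by the zero string to obtain the correct black-peg count, and compute $w_c(q)$ from $b_c(q)$ and the known $m_a$ via your multiset identity. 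This yields $\bmm(n,n)\le \bwmm(n,k)+O(n)$ directly.

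The genuine gap is exactly where you flag it. You assert that the $\le n$ present colors among $k$ can be identified in $O(n+k/n)$ counting queries, each of support at most $n$, but you only gesture at a ``coin-weighing-type'' construction and explicitly defer its verification. That identification step is the substantive content of the cited theorem; without supplying an explicit design (or a randomized construction together with a bounded-expectation restart analysis), your upper bound remains a plan rather than a proof.
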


\begin{proof}[Proof of Theorem \ref{thm:general}]
For small $k$, say $k\leq \sqrt{n}$, the result follows by the results of Chv\'atal \cite{chvatal1983mastermind}. Moreover, for $k\geq n$ the white peg statement follows directly by combining Theorems \ref{thm:main} and \ref{thm:doerr}. Thus it remains to consider the case of $\sqrt{n}\leq k \leq n$, and the case of $k\geq n$ for black-peg Mastermind.

For $\sqrt{n}\leq k \leq n$, the leading terms in both bounds in Theorem \ref{thm:general} are of order $n$, which matches the entropy lower bound.  On the other hand, using Lemma \ref{lem:zero} we can find a query such that $b(z)=0$ in $\OO(n)$ queries. Having found this, we simply follow the same strategy as for $n$ color black-peg Mastermind by replacing any color $>k$ in a query by the corresponding entry of $z$. Thus finishing in $\OO(n)$ queries.

Finally, for black-peg Mastermind with $k \ge n$, the leading term in Theorem \ref{thm:general} is of order $k$. This can be attained by using $k$ guesses to determine which colors appear in the codeword and then reduce to the case of $n$ colors. On the other hand, $\Omega(k)$ is clearly necessary as this is the expected number of queries needed to guess the correct color in a single position, provided the codeword is chosen uniformly at random.
\end{proof}

\bibliographystyle{abbrv}
\bibliography{mastermind}

\end{document}